\newtheorem{result}{\ }[section]
\theoremstyle{changebreak}                
\newtheorem{thm}[result]{Theorem}
\newtheorem{lem}[result]{Lemma}
\newtheorem{prop}[result]{Proposition}
\newenvironment{proof}
 {{\sl Proof.}\hspace*{1 ex}}%
 {{\nopagebreak\hspace*{\fill}$\Box$\par\vspace{12pt}}}
\definecolor{plgreen}{rgb}{0,0.5,0}
\begin{document}

\thispagestyle{empty}
\begin{center} 

{\LARGE Gaussian random projections for Euclidean membership problems}
\par \bigskip
{\sc Vu Khac Ky\footnote{Supported by a Microsft Research Ph.D.~fellowship.}, Pierre-Louis Poirion, Leo Liberti} 
\par \bigskip
{\small {\it CNRS LIX, \'Ecole Polytechnique, F-91128 Palaiseau,
France} \\ Email:\url{vu,poirion,liberti}@lix.polytechnique.fr} 
\par \medskip \today
\end{center}
\par \bigskip

\begin{abstract}
We discuss the application of random projections to the fundamental
problem of deciding whether a given point in a Euclidean space belongs
to a given set. We show that, under a number of different assumptions,
the feasibility and infeasibility of this problem are preserved with
high probability when the problem data is projected to a lower
dimensional space. Our results are applicable to any algorithmic
setting which needs to solve Euclidean membership problems in a
high-dimensional space.
\end{abstract}


\section{Introduction}
Random projections are very useful dimension reduction techniques
which are widely used in computer science \cite{AppEmbedding,
  LectureEmbedding}. We assume we have an algorithm $\mathcal{A}$
acting on a data set $X$ consisting of $n$ vectors in $\mathbb{R}^m$,
where $m$ is large, and assume that the complexity of $\mathcal{A}$
depends on $m$ and $n$ in a way that makes it impossible to run
$\mathcal{A}$ sufficiently fast. A random projection exploits the
statistical properties of some random distribution to construct a
mapping which embeds $X$ into a lower dimensional space $\mathbb{R}^k$
(for some appropriately chosen $k$) while preserving distances,
angles, or other quantities used by $\mathcal{A}$.

One striking example of random projections is the famous
Johnson-Lindenstrauss lemma \cite{JLL}:
\begin{thm}[Johnson-Lindenstrauss Lemma]
  \label{t:jll}
Let $X$ be a set of $m$ points in $\mathbb{R}^m$ and
$\varepsilon>0$. Then there is a map $F:\mathbb{R}^m\to\mathbb{R}^k$
where $k$ is $O(\frac{\log m}{\varepsilon^2})$, such that for any
$x,y\in X$, we have
\begin{equation}
(1 - \varepsilon) \|x-y\|^2_2 \le \|F(x)-F(y)\|^2_2 \le (1 +
  \varepsilon) \|x-y\|^2_2.\label{eq:jll}
\end{equation}
\end{thm}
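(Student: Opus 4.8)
The plan is to take $F$ to be a suitably scaled Gaussian random linear map and to show it has the required property with positive probability; existence of one good map then follows. Concretely, let $F(x) = \frac{1}{\sqrt k}\,T x$ where $T$ is a $k\times m$ matrix whose entries are independent $N(0,1)$ random variables. Since $F$ is linear, $F(x)-F(y)=F(x-y)$, so it is enough to control $\|F(v)\|_2^2$ simultaneously for the $\binom m2$ difference vectors $v = x-y$, with $x,y\in X$.

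The core of the argument is a single-vector concentration estimate: for every fixed $v\in\mathbb{R}^m$ and $\varepsilon\in(0,1)$,
\begin{equation}
\Pr\!\left[\,\big|\,\|F(v)\|_2^2 - \|v\|_2^2\,\big| > \varepsilon\,\|v\|_2^2\,\right] \;\le\; 2\,e^{-c\varepsilon^2 k}\label{eq:conc}
\end{equation}
for an absolute constant $c>0$. To establish \eqref{eq:conc} I would rescale $v$ to a unit vector; then each coordinate $(Tv)_i = \sum_j T_{ij}v_j$ is $N(0,1)$ and the $k$ coordinates are independent, so $Y := k\,\|F(v)\|_2^2 = \sum_{i=1}^k (Tv)_i^2$ follows a $\chi^2_k$ distribution, in particular $\mathbb{E}[Y]=k$. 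The two-sided tail bound then comes from a Chernoff argument using the explicit moment generating function $\mathbb{E}[e^{\lambda Y}] = (1-2\lambda)^{-k/2}$ (valid for $\lambda<\tfrac12$), optimising the exponent over $\lambda$ separately for the upper and lower tails.

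Finally I would union-bound: the probability that \emph{some} pair $x,y\in X$ violates \eqref{eq:jll} is at most $2\binom m2 e^{-c\varepsilon^2 k} < m^2 e^{-c\varepsilon^2 k}$. Taking $k = \lceil C\log m/\varepsilon^2\rceil$ with $C>2/c$ makes this quantity strictly less than $1$, so some realisation of $T$ yields a map $F$ satisfying \eqref{eq:jll} for all pairs at once, and $k = O(\log m/\varepsilon^2)$ as claimed. The routine parts here are the union bound and the choice of $k$; the step that needs genuine care is \eqref{eq:conc}, specifically extracting the clean exponential rate $e^{-c\varepsilon^2 k}$ from the $\chi^2_k$ tail — this requires expanding $\log(1-2\lambda)$ to second order around the optimal $\lambda\approx\varepsilon/2$ and checking that the cubic and higher-order terms in $\varepsilon$ do not degrade the constant.
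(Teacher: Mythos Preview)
Your proposal is correct and matches the approach the paper itself sketches: the paper does not give a full proof of Thm.~\ref{t:jll} but merely outlines it, saying one constructs a random linear map $T$ so that Eq.~\eqref{eq:rp} holds with high probability for each fixed vector and then applies the union bound over all pairs, citing \cite{DataFriendly,ElementaryJLL} for details. Your Gaussian construction, $\chi^2_k$ concentration via the moment generating function, and union bound over $\binom{m}{2}$ pairs is exactly the argument of \cite{ElementaryJLL}, so there is nothing to add.
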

Intuitively, this lemma claims that $X$ can be projected in a much
lower dimensional space whilst keeping Euclidean distances
approximately the same. The main idea to prove Thm.~\ref{t:jll} is to
construct a random linear mapping $T$ (called {\it JL random mapping}
onwards), sampled from certain distribution families, so that for each
$x\in\mathbb{R}^m$, the event that
\begin{equation}
  (1-\varepsilon)\|x\|^2_2 \le\|T(x)\|^2_2\le (1+\varepsilon)\|x\|^2_2
  \label{eq:rp}
\end{equation}
occurs with high probability. By Eq.~\eqref{eq:rp} and the union
bound, it is possible to show the existence of a map $F$ with the
stated properties (see \cite{DataFriendly, ElementaryJLL}).

In this paper we employ random projections to study the following
general problem:
\begin{quote}
  {\sc Euclidean Set Membership Problem} (ESMP). Given $p\in
  \mathbb{R}^{m}$ and $X\subseteq\mathbb{R}^m$, decide whether $p \in
  X$.
\end{quote}
This is a fundamental class consisting of many problems, both in {\bf
  P} (e.g.~the {\sc Linear Feasibility Problem} (LFP)) and {\bf
  NP}-hard (e.g.~the {\sc Integer Feasibility Problem} (IFP), which
can naturally model {\sc sat}, and also see \cite{NPcomplete}).

In this paper, we use a random linear projection operator $T$ to embed
both $p$ and $X$ to a lower dimensional space, and study the
relationship between the original membership problem and its projected
version:
\begin{quote}
  {\sc Projected ESMP} (PESMP). Given $p,X,T$ as above, decide whether
  $T(p) \in T(X)$.  
\end{quote}
Note that, when $p\in X$ ,the fact that $T(p)\in T(X)$ follows by
linearity of $T$. We are therefore only interested in the case when
$p\notin X$, i.e.~we want to estimate $\mbox{\sf Prob}(T(p) \notin
T(X))$, given that $p \notin X$.

\subsection{Previous results}
Random projections applying to some special cases of membership
problems have been studied in \cite{KPLreport}, where we exploited
some polyhedral structures of the problem to derive several results
for polytopes and polyhedral cones. In the case $X$ is a polytope, we
obtained the following result.
\begin{prop}[\cite{KPLreport}]
  Given $a_1,\ldots,a_n \in \mathbb{R}^{m}$, let $C=\mbox{\sf conv}
  \{a_1,\ldots,a_n\}$, $b \in \mathbb{R}^m$ such that $b \notin C$, $d
  = \min\limits_{x \in C} \|b-x\|$ and $D = \max\limits_{1\le i \le n}
  \|b -a_i\|$. Let $T:\mathbb{R}^m \to \mathbb{R}^k$ be a JL random
  mapping. Then
  \begin{equation*}
    \mbox{\sf Prob} \big(T(b) \notin T(C) \big) \ge 1 - 2n^2
    e^{-\mathcal{C}(\varepsilon^2-\varepsilon^3)k}
  \end{equation*}
  for some constant $\mathcal{C}$ (independent of $m,n,k,d,D$) and
  $\varepsilon < \frac{d^2}{D^2}$.
\end{prop}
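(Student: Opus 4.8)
The plan is to reduce the membership question to the positivity of a quadratic form on the standard simplex and then argue that this form is essentially preserved by $T$. Since $T$ is linear, $T(C)=\mbox{\sf conv}\{T(a_1),\dots,T(a_n)\}$, so $T(b)\in T(C)$ if and only if $T(b)=\sum_i\lambda_i T(a_i)$ for some $\lambda=(\lambda_1,\dots,\lambda_n)$ with $\lambda_i\ge 0$ and $\sum_i\lambda_i=1$; since the simplex is compact this is equivalent to $\min_{\lambda}\bigl\|\sum_i\lambda_i\,T(b-a_i)\bigr\|^2=0$. Writing $\bigl\|\sum_i\lambda_i T(b-a_i)\bigr\|^2=\lambda^{\top}\widetilde{Q}\lambda$ with $\widetilde{Q}_{ij}=\langle T(b-a_i),T(b-a_j)\rangle$, and likewise $\bigl\|b-\sum_i\lambda_i a_i\bigr\|^2=\lambda^{\top}Q\lambda$ with $Q_{ij}=\langle b-a_i,b-a_j\rangle$, I would record two deterministic facts: $\min_{\lambda}\lambda^{\top}Q\lambda=d^2$, because $\sum_i\lambda_i a_i$ ranges exactly over $C$; and $\|b-a_i\|\le D$ for every $i$. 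The goal then becomes showing $\lambda^{\top}\widetilde{Q}\lambda>0$ for every $\lambda$ in the simplex.

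Next I would control $\widetilde{Q}$ entrywise. Fix the collection of $n^2$ deterministic vectors consisting of $b-a_i$ for $1\le i\le n$ and of $(b-a_i)\pm(b-a_j)$ for $1\le i<j\le n$, and require that the event \eqref{eq:rp} hold for each of them. By the defining concentration property of the JL random mapping each such event holds with probability at least $1-2e^{-\mathcal{C}(\varepsilon^2-\varepsilon^3)k}$, so by the union bound all of them hold simultaneously with probability at least $1-2n^2 e^{-\mathcal{C}(\varepsilon^2-\varepsilon^3)k}$. On this event the polarization identity $\langle T(u),T(v)\rangle=\tfrac14\bigl(\|T(u+v)\|^2-\|T(u-v)\|^2\bigr)$, together with \eqref{eq:rp} applied to $u\pm v$ and the parallelogram law $\|u+v\|^2+\|u-v\|^2=2\|u\|^2+2\|v\|^2$, gives $|\widetilde{Q}_{ij}-Q_{ij}|\le\tfrac{\varepsilon}{2}\bigl(\|b-a_i\|^2+\|b-a_j\|^2\bigr)\le\varepsilon D^2$ for all $i\ne j$; the diagonal estimate $|\widetilde{Q}_{ii}-Q_{ii}|\le\varepsilon\|b-a_i\|^2\le\varepsilon D^2$ is immediate from \eqref{eq:rp}.

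Finally I would upgrade the entrywise bound to a uniform bound on the two quadratic forms: for any $\lambda$ in the simplex, $|\lambda^{\top}\widetilde{Q}\lambda-\lambda^{\top}Q\lambda|\le\sum_{i,j}\lambda_i\lambda_j\,|\widetilde{Q}_{ij}-Q_{ij}|\le\varepsilon D^2\sum_{i,j}\lambda_i\lambda_j=\varepsilon D^2$, so that $\lambda^{\top}\widetilde{Q}\lambda\ge\lambda^{\top}Q\lambda-\varepsilon D^2\ge d^2-\varepsilon D^2>0$, the last inequality holding precisely because $\varepsilon<d^2/D^2$. Hence on the above event $T(b)\notin T(C)$, and the probability bound follows.

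The step I expect to be the main obstacle is the one linking an assertion about infinitely many convex multipliers $\lambda$ to a finite list of random events, since random projection only controls finitely many prescribed vectors and one cannot union-bound over all $\lambda$. The way around it is the remark that $\lambda^{\top}Q\lambda$ is just the average of the finitely many Gram entries $Q_{ij}$ with the nonnegative weights $\lambda_i\lambda_j$ summing to $1$, so uniform control of the $n^2$ entries yields uniform control of the form. A secondary technical point is the choice of the finite vector set: passing through $(b-a_i)\pm(b-a_j)$ and the parallelogram law keeps each per-entry distortion at exactly $\varepsilon D^2$, whereas expressing $Q_{ij}$ via the $\|a_i-a_j\|$ (only bounded by $2D$) would degrade the constant; this choice is what makes the final threshold come out as $\varepsilon<d^2/D^2$ and the cardinality of the union bound as $n^2$.
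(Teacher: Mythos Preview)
The paper does not actually prove this proposition: it is quoted in the ``Previous results'' subsection as a citation from \cite{KPLreport}, with no argument given here. So there is no in-paper proof to compare against.

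That said, your argument is correct and is the natural route. The key observation --- that $T(b)-\sum_i\lambda_iT(a_i)=\sum_i\lambda_iT(b-a_i)$ because $\sum_i\lambda_i=1$, so the squared distance becomes the bilinear form $\lambda^\top\widetilde Q\lambda$ in the Gram matrix of the $T(b-a_i)$ --- is exactly what reduces the uncountable family of convex coefficients to a finite object. Your entrywise control via polarization on the $n^2$ vectors $b-a_i$ and $(b-a_i)\pm(b-a_j)$ is clean, the count $n+2\binom{n}{2}=n^2$ matches the union bound in the statement, and the final chain $\lambda^\top\widetilde Q\lambda\ge\lambda^\top Q\lambda-\varepsilon D^2\ge d^2-\varepsilon D^2>0$ uses precisely the hypothesis $\varepsilon<d^2/D^2$. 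This is almost certainly the argument intended in the cited report; in any case nothing is missing from your version.
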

If $X$ is a polyhedral cone, we obtained the following result.
\begin{prop}[\cite{KPLreport}]
  Given $b, a_1,\ldots,a_n \in \mathbb{R}^{m}$ of norms $1$ such that
  $b\notin C= \mbox{\sf cone} \{a_1,\ldots,a_n\}$, let $d =
  \min\limits_{x \in C} \|b-x\|$ and $T:\mathbb{R}^m \to \mathbb{R}^k$
  be a JL random mapping. Then:
  \begin{equation*}
    \mbox{\sf Prob}  \big(T(b) \notin T(C) \big) \ge 1 -
    2n(n+1)e^{-\mathcal{C}(\varepsilon^2-\varepsilon^3)k}
  \end{equation*}
  for some constant $\mathcal{C}$ (independent of $m,n,k,d$), where
  $\varepsilon = \frac{d^2}{ \mu_A^2 + 2 \sqrt{1 - d^2}\mu_A + 1}$,
  \begin{equation*}
    \mu_A = \max \{\|x\|_A \;|\; x \in \mbox{\sf
      cone}(a_1,\ldots,a_n) \land \|x\| \le 1\},
  \end{equation*}
  and $\|x\|_A=\min\big\{\sum_i\theta_i\;|\;\theta\ge 0\land
  x=\sum_i\theta_ia_i\big\}$ is the norm induced by
  $A=(a_1,\ldots,a_n)$.
\end{prop}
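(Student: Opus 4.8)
The plan is to exhibit a ``good event'' for $T$, of probability at least $1-2n(n+1)e^{-\mathcal C(\varepsilon^2-\varepsilon^3)k}$, on which $T(b)\notin T(C)$; this is exactly the assertion. First I would reduce projected membership to algebra. Since $T$ is linear, $T(C)=\mbox{\sf cone}\{T(a_1),\dots,T(a_n)\}$, so $T(b)\in T(C)$ holds iff there are $\lambda_1,\dots,\lambda_n\ge 0$ with $\sum_i\lambda_iT(a_i)=T(b)$; setting $x:=\sum_i\lambda_ia_i\in C$ this says $T(b-x)=0$, and we may assume $\lambda$ is a minimal representation of $x$, so that $\textstyle\sum_i\lambda_i=\|x\|_A\le\mu_A\|x\|$ and $\|b-x\|\ge d$. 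In parallel, let $x^*=\sum_i\mu_i a_i$ be the Euclidean projection of $b$ onto the closed convex cone $C$ (again a minimal representation) and put $u:=b-x^*$. The variational inequality $\langle b-x^*,\,y-x^*\rangle\le 0$ for all $y\in C$ gives $\|u\|=d$, $\langle u,x^*\rangle=0$ (hence $\|x^*\|=\sqrt{1-d^2}$, since $\|b\|=1$), $\langle u,b\rangle=d^2$, $\langle u,a_i\rangle\le 0$ for all $i$ with equality whenever $\mu_i>0$, and $\textstyle\sum_i\mu_i=\|x^*\|_A\le\mu_A\sqrt{1-d^2}$.

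Next I would fix the randomness. Applying the concentration bound \eqref{eq:rp} to the $n(n+1)$ vectors $\{a_i\pm a_j\}_{i<j}$ and $\{b\pm a_i\}_{i}$ and taking a union bound, with probability at least $1-2n(n+1)e^{-\mathcal C(\varepsilon^2-\varepsilon^3)k}$ each of $\|T(a_i\pm a_j)\|^2$ and $\|T(b\pm a_i)\|^2$ is within a $(1\pm\varepsilon)$ factor of its exact value; by polarization this controls every $\langle T(a_i),T(a_j)\rangle$ and $\langle T(b),T(a_i)\rangle$ up to an additive $\varepsilon$, and by the parallelogram identity it controls the diagonal norms $\|T(a_i)\|^2$ and $\|T(b)\|^2$ up to an $O(\varepsilon)$ factor (the loss being absorbed into $\mathcal C$). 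Because $u$ is a fixed linear combination of $b$ and the $a_i$, the quantities $\|T(u)\|^2$, $\langle T(u),T(a_i)\rangle$, $\langle T(u),T(b)\rangle$ are then equally well controlled. I now work on this event and seek a contradiction with $T(b-x)=0$.

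The heart of the argument is a two-sided estimate on the certificate length $\sigma:=\sum_i\lambda_i$. For the lower bound, pair the identity $T(b-x)=0$ with $T(u)$: writing $b-x=u+(x^*-x)$ one gets $\|T(u)\|^2+\langle T(u),T(x^*)\rangle=\langle T(u),T(x)\rangle$, where the left-hand side equals $d^2$ up to an error controlled by $\|x^*\|_A\le\mu_A\sqrt{1-d^2}$ (using $\langle u,a_i\rangle=0$ on the support of $\mu$), while the right-hand side is at most $\langle u,x\rangle+O(\varepsilon)\sigma\le O(\varepsilon)\sigma$ since $\langle u,x\rangle\le 0$; this forces $\sigma$ to be of order $d^2/\varepsilon$. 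For the upper bound, $\|T(x)\|^2=\|T(b)\|^2\le 1+\varepsilon$ combined with $\|T(x)\|^2=\sum_{i,j}\lambda_i\lambda_j\langle T(a_i),T(a_j)\rangle\ge\|x\|^2-\varepsilon\sigma^2\ge\|x\|^2(1-\varepsilon\mu_A^2)$ yields, since $\varepsilon\mu_A^2<d^2\le 1$, the bound $\|x\|^2\le\frac{1+\varepsilon}{1-\varepsilon\mu_A^2}$, and hence $\sigma\le\mu_A\|x\|\le\mu_A\sqrt{\tfrac{1+\varepsilon}{1-\varepsilon\mu_A^2}}$. Inserting the value $\varepsilon=\dfrac{d^2}{\mu_A^2+2\sqrt{1-d^2}\mu_A+1}$, for which the telling identity $1-\varepsilon\mu_A^2=\dfrac{(\mu_A\sqrt{1-d^2}+1)^2}{\mu_A^2+2\sqrt{1-d^2}\mu_A+1}$ holds, one checks that the lower and upper bounds on $\sigma$ are incompatible; this contradiction shows $T(b)\notin T(C)$ on the good event.

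I expect the main obstacle to be precisely the balancing of these constants: a norm-only estimate such as $0=\|T(b-x)\|^2\ge d^2-\varepsilon(1+\sigma)^2$ is far too lossy, so one really needs the amplification gained by testing $T(b-x)=0$ against the separating direction $u$, together with the delicate bookkeeping that routes the a priori uncontrolled quantity $\sigma$ through $\mu_A\|x^*\|=\mu_A\sqrt{1-d^2}$ and through the special algebraic identity obeyed by the stated $\varepsilon$. Trimming the auxiliary-vector family to exactly $n(n+1)$ elements — using only the sums and differences $a_i\pm a_j$ and $b\pm a_i$, and recovering the diagonal norms via the parallelogram law — is a minor but necessary part of the accounting.
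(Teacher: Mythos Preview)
The paper does not actually prove this proposition: it is quoted verbatim from \cite{KPLreport} in the ``Previous results'' subsection, with no accompanying argument. There is therefore no proof in the present paper to compare your attempt against.

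As an independent comment on your sketch: the overall architecture is the natural one --- fix a good JL event on the $n(n+1)$ pairwise sums and differences of $b,a_1,\dots,a_n$, then derive a contradiction from $T(b)=T(x)$ by sandwiching the certificate length $\sigma=\sum_i\lambda_i$ between incompatible bounds --- and your counting of $n(n+1)$ auxiliary vectors matches the exponent in the statement. However, the argument as written does not close. The step ``Because $u$ is a fixed linear combination of $b$ and the $a_i$, the quantities $\|T(u)\|^2$, $\langle T(u),T(a_i)\rangle$ \dots\ are then equally well controlled'' hides that the additive error on $\langle T(u),T(a_j)\rangle$ is not $\varepsilon$ but $\varepsilon\bigl(1+\sum_i\mu_i\bigr)$, and on $\|T(u)\|^2$ it is $\varepsilon\bigl(1+\sum_i\mu_i\bigr)^2$; these extra factors of $1+\mu_A\sqrt{1-d^2}$ have to be tracked through the lower bound on $\sigma$, and you never actually carry out the promised check that the resulting lower and upper bounds are incompatible for the stated value of $\varepsilon$. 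Your own closing paragraph (``I expect the main obstacle to be precisely the balancing of these constants'') concedes this. So the proposal is a reasonable plan rather than a proof; to complete it you must write out both bounds explicitly in terms of $d,\mu_A,\varepsilon$ and verify the strict inequality, or else consult \cite{KPLreport} for the intended bookkeeping.
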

We also recall the following Lemma, useful for the integer case.
\begin{lem}[\cite{KPLreport}] \label{membershiplemma}
Let $T:\mathbb{R}^m \to \mathbb{R}^k$ be a JL random mapping, let
$b,a_1,\ldots,a_n\in\mathbb{R}^m$ and let $X \subseteq \mathbb{R}^m$
be a finite set. Then if $b \neq \sum_{i=1}^m y_i a_i$ for all $y \in
X$, we have
  \[\mbox{\sf Prob}\, \big(\forall y\in X\;|\; 
    T(b) \neq \sum_{i=1}^m y_i T(a_i) \big) \ge 1 -
  2|X|e^{-\mathcal{C}k};\] \label{lm3}
for some constant $\mathcal{C}>0$ (independent of $m,k$).
\end{lem}

\subsection{New results}
In this paper, we consider the general case where the data set $X$ has
no specific structure, and use Gaussian random projections in our
arguments to obtain some results about the relationship between ESMP
and PESMP.

In the case when $X$ is at most countable (i.e. finite or countable),
using a straightforward argument, we prove that these two problems are
equivalent almost surely. However, this result is only of theoretical
interest due to round-off errors in floating point operations, which
make its practical application difficult. We address this issue by
introducing a threshold $\delta>0$ with a corresponding {\sc Threshold
  ESMP} (TESMP): if $\Delta$ is the distance between $T(p)$ and the
closest point of $T(X)$, decide whether $\Delta\ge\delta$.

In the case when $X$ may also be uncountable, we employ the {\it
  doubling constant} of $X$, i.e.~the smallest number $\lambda_X$ such
that any closed ball in $X$ can be covered by at most $\lambda_X$
closed balls of half the radius. Its logarithm $\log_2 \lambda_X$ is
called {\it doubling dimension} of $X$. Recently, the doubling
dimension has become a powerful tool for several classes of problems
such as nearest neighbor \cite{NavigateSet,NNembed}, low-distortion
embeddings \cite{ahy07}, clustering \cite{magen}.

We show that we can project $X$ into $\mathbb{R}^k$, where $k =
O(\log_2 \lambda_X)$, whilst still ensure the equivalence between ESMP
and PESMP with high probability. We also extend this result to the
threshold case, and obtain a more useful bound for $k$.

\section{Finite and countable sets}
In this section, we assume that $X$ is either finite or countable. Let
$T$ be a JL random mapping from a Gaussian distribution, i.e.~each
entry of $T$ is independently sampled from $\mathcal{N}(0,1)$. It is
well known that, for an arbitrary unit vector $a \in
\mathbb{S}^{m-1}$, the random variable $\|Ta\|^2$ has a Chi-squared
distribution $\chi_k^2$ with $k$ degrees of freedom
(\cite{chi-square-book}). Its corresponding density function is
$\frac{2^{-k/2}}{\Gamma(k/2)} x^{k/2 - 1} e^{k/2}$, where
$\Gamma(\cdot)$ is the gamma function. By \cite{ElementaryJLL}, for
any $0<\delta<1$, taking $z=\frac{\delta}{k}$ yields a cumulative
distribution function
\begin{equation}
\label{gauss-bound2}
  F_{\chi_k^2}(\delta) \le (z e^{1-z})^{k/2} < (ze)^{k/2} =
  \left(\frac{e\delta }{k}\right)^{k/2}. 
\end{equation}
Thus, we have
\begin{equation}
  \label{gauss-bound1}
  \mbox{\sf Prob}(\|Ta\|  \le \delta ) = F_{\chi_k^2}(\delta^2) <
  (3\delta^2)^{k/2} 
\end{equation}
or, more simply, $\mbox{\sf Prob} (\|Ta\| \le \delta ) < \delta^k$
when $k \ge 3$.

Using this estimation, we immediately obtain the following result.
\begin{prop}
Given $p\in\mathbb{R}^m$ and $X\subseteq\mathbb{R}^m$, at most
countable, such that $p \notin X$. Then, for a Gaussian random
projection $T:\mathbb{R}^m\to \mathbb{R}^k$ with any $k \ge 1$, we
have $T(p)\notin T(X)$ almost surely, i.e.~$\mbox{\sf Prob}\big(T(p)
\notin T(X)\big) = 1$.
\label{p:1row}
\end{prop}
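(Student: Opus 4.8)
The plan is to reduce the statement to a countable union of measure-zero events, then invoke countable subadditivity. Since $p \notin X$, for each fixed $x \in X$ the vector $p - x$ is nonzero; write $v_x = p - x \neq 0$. By linearity of $T$, the event $T(p) = T(x)$ is precisely the event $T(v_x) = 0$, i.e.\ $\|T(v_x)\| = 0$. So I would first establish that for any fixed nonzero $v \in \mathbb{R}^m$, $\mbox{\sf Prob}(T(v) = 0) = 0$.

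To prove that single claim, I would normalize: let $a = v/\|v\|$, so $\|T(v)\| = \|v\|\,\|T(a)\|$ and hence $T(v) = 0$ iff $\|T(a)\| = 0$. Now apply the Gaussian tail bound \eqref{gauss-bound1} (valid for $k \ge 1$ in the weaker form $\mbox{\sf Prob}(\|Ta\| \le \delta) < (3\delta^2)^{k/2}$, or even more elementarily just the fact that $\|Ta\|^2 \sim \chi_k^2$ has a density, hence no atom at $0$): for every $\delta > 0$, $\mbox{\sf Prob}(\|Ta\| = 0) \le \mbox{\sf Prob}(\|Ta\| \le \delta) < (3\delta^2)^{k/2}$, and letting $\delta \to 0$ forces $\mbox{\sf Prob}(\|Ta\| = 0) = 0$. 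Alternatively one can note that even a single row of $T$ gives a nonzero linear functional almost surely, since $\{w : \langle w, a\rangle = 0\}$ is a hyperplane of Lebesgue measure zero and the first row of $T$ is absolutely continuous.

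Having fixed that, I would finish with the union bound: since $X$ is at most countable, $\{x \in X : x \neq p\} = X$ is at most countable, so
\begin{equation*}
\mbox{\sf Prob}\big(T(p) \in T(X)\big) = \mbox{\sf Prob}\Big(\bigcup_{x \in X} \{T(v_x) = 0\}\Big) \le \sum_{x \in X} \mbox{\sf Prob}\big(T(v_x) = 0\big) = 0,
\end{equation*}
whence $\mbox{\sf Prob}(T(p) \notin T(X)) = 1$.

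There is no serious obstacle here; the only thing to be a little careful about is the case of $X$ genuinely countably infinite, where one must use countable (not just finite) subadditivity of the probability measure — this is fine, but it is the one place where "at most countable" is used essentially, and it is worth flagging that the argument breaks for uncountable $X$ (motivating the later sections). A secondary point worth stating explicitly: the atomlessness of $\chi_k^2$ at $0$ is exactly what makes each $\mbox{\sf Prob}(T(v_x) = 0) = 0$, and the bound \eqref{gauss-bound1} is more than enough to see this for any $k \ge 1$.
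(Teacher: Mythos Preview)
Your proof is correct and follows essentially the same approach as the paper: show via the bound \eqref{gauss-bound1} (after normalizing) that $\mbox{\sf Prob}(T(v)=0)=0$ for each fixed nonzero $v$, then conclude by countable (sub)additivity. The paper phrases the last step as an intersection of countably many almost-sure events rather than a union bound on the complement, but this is the same argument.
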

\begin{proof}
First, note that for any $u\neq 0$, $Tu \neq 0$ holds almost
certainly. Indeed, without loss of generality we can assume that
$\|u\| = 1$. Then for any $0 < \delta < 1$:
\begin{equation*}
  \mbox{\sf Prob}\big(T(z) = 0\big)\le \mbox{\sf Prob}\big(\|Tz\| \le
  \delta \big) = (3\delta^2)^{k/2} \to 0 \mbox{ as } \delta \to 0.
\end{equation*}
Since the event $T(p)\notin T(X)$ can be written as the intersection
of at most countably many almost sure events $T(p) \neq T(x)$ (for $x
\in X$), it follows that $\mbox{\sf Prob}\big(T(p) \notin T(X)\big) =
1$, as claimed. 
\end{proof}

Proposition \ref{p:1row} is simple, but it looks interesting because
it suggests that we only need to project the data points to a line
(i.e.~$k = 1$) and study an equivalent membership problem on a line.
Furthermore, it turns out that this result remains true for a large
class of random projections.
\begin{prop}
Let $\nu$ be a probability distribution on $\mathbb{R}^m$ with bounded
Lebesgue density $f$. Let $Y\subseteq\mathbb{R}^m$ be an at most
countable set such that $0 \notin Y$. Then, for a random projection
$T:\mathbb{R}^m\to \mathbb{R}^1$ sampled from $\nu$, we have $0 \notin
T(Y)$ almost surely, i.e.~$\mbox{\sf Prob}\big(0 \notin T(Y)\big) =
1$.
\label{pprime:1row}
\end{prop}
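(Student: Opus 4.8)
The plan is to reduce the claim to a union bound over the countably many "bad" events, one for each $y \in Y$, and to show that each such event has measure zero. Concretely, for a fixed nonzero $y \in Y$, the projection $T$ is a random vector in $\mathbb{R}^m$ (the single row of the projection matrix) sampled from $\nu$, and the event $T(y) = 0$ is exactly the event that $T$ lands in the hyperplane $H_y = \{t \in \mathbb{R}^m \mid \langle t, y\rangle = 0\}$. Since $y \neq 0$, this hyperplane has dimension $m-1$, hence Lebesgue measure zero in $\mathbb{R}^m$. Because $\nu$ has a density $f$ with respect to Lebesgue measure (and boundedness is more than enough — mere absolute continuity suffices), $\nu(H_y) = \int_{H_y} f\,d\mathrm{Leb} = 0$. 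Thus $\mbox{\sf Prob}(T(y) = 0) = 0$ for every $y \in Y$.

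Next I would assemble these into the conclusion: the event $0 \in T(Y)$ is the union $\bigcup_{y \in Y}\{T(y) = 0\}$, which is a countable union (since $Y$ is at most countable) of null events, hence itself null. Therefore $\mbox{\sf Prob}(0 \notin T(Y)) = 1$, as claimed. I would also remark that this argument is strictly more general than Proposition \ref{p:1row}: the Gaussian case is recovered by noting that a row of i.i.d.\ $\mathcal{N}(0,1)$ entries has the density $(2\pi)^{-m/2} e^{-\|t\|^2/2}$, which is bounded, and by applying the translation $Y = X - p$ so that $p \notin X$ becomes $0 \notin Y$ while $T(p) \in T(X)$ becomes $0 \in T(Y)$ by linearity.

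The only genuinely delicate point is the measure-theoretic fact that an affine hyperplane in $\mathbb{R}^m$ is Lebesgue-null and that absolute continuity transfers this to $\nu$; everything else is a one-line union bound. I do not expect this to be an obstacle — it is a standard consequence of Fubini's theorem (integrate out the coordinate transverse to $H_y$) — but it is the step that actually uses the hypothesis on $\nu$, so it deserves to be stated cleanly rather than waved away. One should also make sure the statement is interpreted correctly: "$T$ sampled from $\nu$" means the $1 \times m$ matrix $T$ is identified with its transpose as a point of $\mathbb{R}^m$ drawn according to $\nu$, so that $T(y) = \langle T, y\rangle$; with that reading the proof above goes through verbatim.
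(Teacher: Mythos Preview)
Your proof is correct and follows essentially the same route as the paper's: identify the event $T(y)=0$ with the hyperplane $H_y=\{t\in\mathbb{R}^m\mid \langle t,y\rangle=0\}$, note that this set has Lebesgue measure zero so $\nu(H_y)=0$ by absolute continuity, and conclude via a countable union bound. Your observation that boundedness of $f$ is not actually needed (absolute continuity suffices) is a valid sharpening of the hypothesis.
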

\begin{proof}
For any $0 \neq y \in Y$, consider the set
$\mathcal{E}_y=\{T:\mathbb{R}^m\to \mathbb{R}^1 \; |\; T(y)=0 \}$.  If
we regard each $T:\mathbb{R}^m\to \mathbb{R}^1$ as a vector $t \in
\mathbb{R}^m$, then $\mathcal{E}_y$ is a hyperplane $\{t \in
\mathbb{R}^m | \; y\cdot t=0 \}$ and we have
$$\mbox{\sf Prob}(T(y)=0) = \nu(\mathcal{E}_y) = \int_{\mathcal{E}_y}
fd\mu\leq \|f\|_{\infty}\int_{\mathcal{E}_y} d\mu = 0$$ where $\mu$
denotes the Lebesgue measure on $\mathbb{R}^m$. The proof then follows
by the countability of $Y$, similarly to Proposition \ref{p:1row}.
\end{proof}

Proposition \ref{pprime:1row} is based on the observation that the
	degree $[\mathbb{R}:\mathbb{Q}]$ of the field extension
	$\mathbb{R}/\mathbb{Q}$ is $2^{\aleph_0}$, whereas $Y$ is countable;
	so the probability that any row vector $T_i$ of the random
	projection matrix $T$ will yield a linear dependence relation
	$\sum_{j\le m} T_{ij}y_j=0$ for some $0\not=y\in Y$ is zero. In
	practice, however, $Y$ is part of the rational input of a decision
	problem, and the components of $T$ are rational: hence any
	subsequence of them is trivially linearly dependent over
	$\mathbb{Q}$. Moreover, floating point numbers have a bounded binary
	representation: hence, even if $Y$ is finite, there is a nonzero
	probability that any subsequence of components of $T$ will be
	linearly dependent by means of a nonzero multiplier vector in $Y$.

This idea, however, does not work in practice: we tested it by
considering the ESMP given by the IPF defined on the set
$\{x\in\mathbb{Z}^n_+\cap[L,U]\;|\;Ax=b\}$. Numerical experiments
indicate that the corresponding PESMP
$\{x\in\mathbb{Z}^n_+\cap[L,U]\;|\;T(A)x=T(b)\}$, with $T$ consisting
of a one-row Gaussian projection matrix, is always feasible despite
the infeasibility of the original IPF. Since Prop.~\ref{p:1row}
assumes that the components of $T$ are real numbers, we think that the
reason behind this failure is the round-off error associated to the
floating point representation used in computers. Specifically, when
$T(A)x$ is too close to $T(b)$, floating point operations will
consider them as a single point. In order to address this issue, we
force the projected problems to obey stricter requirements. In
particular, instead of only requiring that $T(p) \notin T(X)$, we
ensure that
\begin{equation*}
  \mbox{\sf dist}(T(p),T(X)) = \min_{x \in X} \; \|T(p) - T(x)\|>\tau,
\end{equation*}
where $\mbox{\sf dist}$ denotes the Euclidean distance, and $\tau>0$
is a (small) given constant. With this restriction, we obtain the
following result.
\begin{prop}
  Given $\tau,\delta>0$  and $p\notin
  X\subseteq\mathbb{R}^m$, where $X$ is a finite set, let
  \begin{equation*}
    d = \min\limits_{x \in X} \; \|p - x\| > 0.
  \end{equation*}
  Let $T:\mathbb{R}^m\to\mathbb{R}^k$ be a Gaussian random projection
  with $k\ge \frac{\log(\frac{|X|}{\delta})}{\log(\frac{d}{\tau})}$. Then:
  \begin{equation*}
    \mbox{\sf Prob}\big(\min_{x \in X} \; \|T(p) - T(x)\| > \tau \big)
    > 1 - \delta.
  \end{equation*}
\end{prop}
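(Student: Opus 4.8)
The plan is to bound, for each individual $x\in X$ separately, the probability that $T$ brings $T(p)$ within distance $\tau$ of $T(x)$, and then take a union bound over the finitely many points of $X$. The key reduction is that $T$ is linear, so $T(p)-T(x)=T(p-x)$; since $p\notin X$ the vector $p-x$ is nonzero and $\|p-x\|\ge d$, so writing $u_x=(p-x)/\|p-x\|\in\mathbb{S}^{m-1}$ we get $\|T(p)-T(x)\|=\|p-x\|\cdot\|Tu_x\|\ge d\,\|Tu_x\|$. Consequently the bad event $\{\|T(p)-T(x)\|\le\tau\}$ is contained in $\{\|Tu_x\|\le\tau/d\}$, whose probability equals $F_{\chi_k^2}\big((\tau/d)^2\big)$ and, crucially, does not depend on which unit vector $u_x$ arose.

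Next I would invoke the Gaussian tail estimate \eqref{gauss-bound1}, in the simplified form $\mbox{\sf Prob}(\|Tu\|\le t)<t^k$ valid for $k\ge 3$ and $0<t<1$ (the cases $k=1$ and $k=2$ follow from a one-line computation with the $\chi^2_1$ and $\chi^2_2$ densities, or are subsumed by simply taking $k=3$). This is also the point at which one uses $d>\tau$, which is in any case forced by the hypothesis: otherwise $\log(d/\tau)\le 0$ and the stated lower bound on $k$ is vacuous or meaningless. Applying the estimate with $t=\tau/d$ and summing over $X$ via the union bound gives
\[
  \mbox{\sf Prob}\big(\exists x\in X:\ \|T(p)-T(x)\|\le\tau\big)\ \le\ \sum_{x\in X}\mbox{\sf Prob}\big(\|Tu_x\|\le\tau/d\big)\ <\ |X|\,(\tau/d)^k .
\]

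Finally, the assumption $k\ge\log(|X|/\delta)/\log(d/\tau)$ is, after multiplying through by $\log(d/\tau)>0$ and exponentiating, exactly the inequality $|X|\,(\tau/d)^k\le\delta$. Hence $\mbox{\sf Prob}\big(\exists x\in X:\ \|T(p)-T(x)\|\le\tau\big)<\delta$, and, since $X$ is finite so the minimum is attained, passing to the complement yields $\mbox{\sf Prob}\big(\min_{x\in X}\|T(p)-T(x)\|>\tau\big)>1-\delta$, as claimed. I do not expect a substantial obstacle: the only things requiring care are the directions of the inequalities (that $\|p-x\|\ge d$ \emph{shrinks} the bad event, that $\tau/d<1$ is needed both for the tail bound to bite and for $\log(d/\tau)$ to be positive), the handling of small $k$ noted above, and keeping the final bound strict, which it is because \eqref{gauss-bound1} is itself strict.
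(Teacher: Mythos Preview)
Your proposal is correct and follows essentially the same argument as the paper: normalize $p-x$ to a unit vector, use $\|p-x\|\ge d$ to enlarge the bad event to $\{\|Tu_x\|\le\tau/d\}$, apply the chi-squared tail bound \eqref{gauss-bound1} in its simplified form for $k\ge 3$, and finish with a union bound over $X$ and the algebraic equivalence between the hypothesis on $k$ and $|X|(\tau/d)^k\le\delta$. You are in fact slightly more careful than the paper, which simply assumes $k\ge 3$ and does not discuss the implicit requirement $\tau<d$.
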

\begin{proof}
We assume that $k\ge 3$. For any $x\in X$ we have:
\begin{eqnarray*}
  \mbox{\sf Prob}\big(\|T(p-x)\| \le \tau\big) &=& \mbox{\sf
    Prob}\bigg(\bigg\|T\big(\frac{p-x}{\|p-x\|}\big)\bigg\| \le
  \frac{\tau}{\|p-x\|} \bigg) \\ & \le& \mbox{\sf Prob}\bigg(\bigg
  \|T\big(\frac{p-x}{\|p-x\|}\big)\bigg\| \le \frac{\tau}{d} \bigg) <
  \frac{\tau^k }{d^k},
\end{eqnarray*}
due to (\ref{gauss-bound2}). Therefore, by the union bound,
\begin{align*}
\mbox{\sf Prob}\big(\min\limits_{x \in X} \; \|T(p) - T(x)\| > \tau
\big) & = 1 - \mbox{\sf Prob}\big(\min\limits_{x \in X} \; \|T(p) -
  T(x)\| \le \tau \big) \\ 
  & \ge 1 - \sum\limits_{x \in X} \mbox{\sf
    Prob}\big(\|T(p) - T(x)\| \le \tau \big) > 1 - |X|  \left(\frac{\tau}{d}\right)^k. 
\end{align*}
The RHS is greater than or equal to $1 - \delta$ if and only if
$\left(\frac{d}{\tau}\right)^k\ge \frac{|X|}{\delta}$, which is
equivalent to $k\ge\frac{\log (\frac{|X|}{\delta})}{\log
  (\frac{d}{\tau})}$, as claimed.
\end{proof}
Note that $d$ is often unknown and can be arbitrarily small. However,
if both $p,X$ are integral, then $d \ge 1$ and we can select
$k>\frac{\log \frac{|X|}{\delta}}{\log \frac{1}{\tau}}$ in the above
proposition. 

In many cases, the set $X$ is infinite. We show that when this is the
case, we can still overcome this difficulty under some assumptions. In
particular, we prove that if $X=\{Ax\;|\;x\in \mathbb{Z}^n_+\}$ where
$A$ is an $m\times n$ matrix with integer coefficients which are all
positive in at least one row, then for any bounded vector $b \in
\mathbb{Z}^m$ the problem $b \in X$ is equivalent, with high
probability, to its projection to a $O(\log n)$-dimensional space. The
idea is to separate one positive row and apply random projection to
the others.

Formally, let us denote by $a^i$ the $i$-th row and by $a_j$ the
$j$-th column of $A$. Assume that all entries in the row $a^i$ is
positive and all entries of $b$ are bounded by a constant $B >
0$. Remove the row $i$ from $A$ and $b$ to obtain
$\tilde{A}=(a'_1,\ldots,a'_n) \in \mathbb{Z}^{(m-1)\times n}$ and
$\tilde{b}\in\mathbb{Z}^{m-1}$.  Let $T: \mathbb{R}^{m-1} \to
\mathbb{R}^k$ be a JL random mapping and denote by
$Z=\{x\in\mathbb{Z}^n_+\;|\;a^i\cdot x=b_i\}$. Then we have: 
\begin{prop}
 Assume that $b\notin X$, and let $0<\delta<1$. Using the terminology
 and given the assumptions above, if
 $k\ge\frac{1}{\mathcal{C}}\ln(\frac{2}{\delta})+\frac{B}{\mathcal{C}}
  \log(n+B-1)$ we have
 \[
 \mbox{\sf Prob}\bigg(T(b)\neq
 \sum\limits_{j=1}^n x_jT(a'_j)  \mbox{ for all }x\in Z\bigg)\geq
 1-\delta
 \]
 for some constant $\mathcal{C}>0$.
 \label{prop:PL}
\end{prop}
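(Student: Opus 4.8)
The plan is to reduce Proposition~\ref{prop:PL} to the finite‑set Lemma~\ref{membershiplemma}: the reason for singling out a row $a^i$ with positive integer entries is precisely that it turns the (possibly infinite) fibre of $A$ over $b$ into a \emph{finite} set $Z$, after which the lemma applies verbatim. First I would reformulate the hypothesis. Since $Ax=b$ is the conjunction of $a^i\cdot x=b_i$ and $\tilde Ax=\tilde b$, and $Z=\{x\in\mathbb Z^n_+\mid a^i\cdot x=b_i\}$ is exactly the set of nonnegative integer points satisfying the first equation, we have
\[
b\notin X\quad\Longleftrightarrow\quad \tilde Ax=\sum_{j=1}^n x_j a'_j\neq\tilde b\ \text{ for every }x\in Z .
\]
So it suffices to show that, with probability at least $1-\delta$, the random map $T$ keeps $\tilde b$ off the finite image $\{\tilde Ax\mid x\in Z\}$.

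The key step is the estimate $|Z|\le (n+B-1)^B$. If $b_i<0$ then $Z=\emptyset$ and there is nothing to prove, so assume $b_i\ge 0$. Every entry of $a^i$ is a positive integer, hence $\ge 1$, so any $x\in Z$ satisfies $\|x\|_1=\sum_j x_j\le\sum_j a^i_j x_j=b_i\le B$; thus $Z$ sits inside the set of nonnegative integer vectors of $\ell_1$‑norm at most $B$. Counting these already gives a finite bound; a coefficient‑wise comparison of the generating function $\prod_{j}(1-t^{a^i_j})^{-1}$ with $(1-t)^{-n}$ (the extremal case being $a^i=(1,\dots,1)$, $b_i=B$) refines it to $|Z|\le\binom{n+B-1}{B}\le\frac{(n+B-1)^B}{B!}\le (n+B-1)^B$. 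This is the only place where the "one positive integer row'' assumption is used, and it is the step I expect to require the most care.

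Finally I would apply Lemma~\ref{membershiplemma} with the finite set $Z$ in the role of $X$, the columns $a'_1,\dots,a'_n$ in the role of $a_1,\dots,a_n$, and $\tilde b$ in the role of $b$; its hypothesis is exactly the reformulated non‑membership above. It yields
\[
\mbox{\sf Prob}\Big(T(\tilde b)\neq\sum_{j=1}^n x_j T(a'_j)\ \text{ for all }x\in Z\Big)\ \ge\ 1-2|Z|\,e^{-\mathcal{C}k}\ \ge\ 1-2(n+B-1)^B e^{-\mathcal{C}k},
\]
with $\mathcal{C}$ the constant of Lemma~\ref{membershiplemma}. The right‑hand side is $\ge 1-\delta$ precisely when $\mathcal{C}k\ge\ln(2/\delta)+B\log(n+B-1)$, i.e.\ when $k\ge\frac1{\mathcal{C}}\ln\frac2\delta+\frac B{\mathcal{C}}\log(n+B-1)$, which is the assumed lower bound on $k$. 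The degenerate cases are absorbed into this argument: if $b_i<0$ then $Z=\emptyset$ and the claim is vacuous, and if $b_i=0$ then $Z=\{0\}$, so the statement reduces to $T(\tilde b)\neq 0$, which is consistent since $b\notin X$ forces $\tilde b\neq 0$ whenever $0\in Z$. Thus the reformulation is immediate, the cardinality bound is the crux, and the conclusion is just Lemma~\ref{membershiplemma} plus arithmetic.
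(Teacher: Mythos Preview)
Your proposal is correct and follows essentially the same route as the paper: bound $|Z|$ by $\binom{n+B-1}{B}\le(n+B-1)^B$ via the positive-integer-row constraint, then invoke Lemma~\ref{membershiplemma} and solve the resulting inequality for $k$. You are in fact more careful than the paper in two places---you spell out the reformulation $b\notin X\Longleftrightarrow\tilde Ax\neq\tilde b$ for all $x\in Z$ (needed to check the hypothesis of the lemma), and you justify the coefficient bound via generating functions, whereas the paper states the inequality $|Z|\le|\{x\in\mathbb Z^n_+:\sum_j x_j=b_i\}|$ without argument---but the overall strategy is identical.
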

\begin{proof}
  We first show that $|Z| \leq (n+B-1)^{B}$. Since all the entries of
  $A$ are positive integers, we have
  \[|Z| \leq |\{x \in \mathbb{Z}^n_+ \; |\;
    \sum\limits_{j=1}^nx_j=b_i\}|\leq |\{x \in \mathbb{Z}^n_+ \; |\;
    \sum\limits_{j=1}^nx_j=B\}|.
    \]
  The number of elements in the RHS corresponds to the
  number of combinations with repetitions of $B$ items sampled from
  $n$, which is equal to
  ${n+B-1 \choose n-1} = {n+B-1 \choose B} \leq (n+B-1)^{B}$.

  Next, by Lemma \ref{membershiplemma}, we have:
  \begin{equation}
    \mbox{\sf Prob}\bigg(T(b)\neq \sum\limits_{j=1}^n x_jT(a'_j)
    \mbox{ for all }x\in Z\bigg) \ge 1 -
    2(n+B-1)^{B}e^{-\mathcal{C}k}, \label{eq:rlm}
  \end{equation}
  which is greater than $1 - \delta$ when taking any $k$ such that $k
    \ge \frac{1}{\mathcal{C}}\ln(\frac{2}{\delta}) +
    \frac{B}{\mathcal{C}} \log(n+B-1)$. The proposition is proved.
\end{proof}
Note that in Prop.~\ref{prop:PL} we can choose the JL random mapping
$T$ as a matrix with $\{-1, +1\}$ entries (Rademacher variables). In
this case, there is no need to worry about floating point errors.

\section{Sets with low doubling dimension}
In this section, we denote by $B(x,r)$ the closed ball centered at $x$
with radius $r > 0$, and $B_X(x,r)=B(x,r)\cap X$. We will also assume
that $X$ is a doubling space, i.e. a set with bounded doubling
dimension. One example of doubling spaces is a Euclidean space.
$\mathbb{R}^m$, we can show that the doubling dimension
$\log_2(\lambda_X)$ of $X$ can be shown to be a constant factor of $m$
(\cite{ddeuclid, fractal}). However, many sets of low doubling
dimensions are contained in high dimensional spaces
(\cite{intrinsic}). Note that computing the doubling dimension of a
metric space is generally {\bf NP}-hard (\cite{ProxNear}). We shall
make use of the following simple lemma.
\begin{lem}
  \label{keylem}
  For any $p\in X$ and $\varepsilon,r>0$, there is a set $S\subseteq
  X$ of size at most $\lambda_X^{\lceil
    \log_2(\frac{r}{\varepsilon})\rceil}$ such that
  \begin{equation*}
    B_X(p,r)\subseteq \bigcup_{s \in S_j} B(s, \varepsilon).
  \end{equation*}
\end{lem}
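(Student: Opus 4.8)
The plan is to prove Lemma~\ref{keylem} by iterating the doubling property $\lceil \log_2(r/\varepsilon)\rceil$ times, each time halving the radius of the covering balls. First I would set $N = \lceil \log_2(r/\varepsilon)\rceil$, so that $r/2^N \le \varepsilon$. The doubling property of $X$ says that any ball $B_X(x,\rho)$ can be covered by at most $\lambda_X$ balls of radius $\rho/2$ whose centers we may take in $X$ (intersecting the covering balls with $X$ if necessary). Applying this to $B_X(p,r)$ gives a cover by $\lambda_X$ balls of radius $r/2$; applying it again to each of those gives $\lambda_X^2$ balls of radius $r/4$; after $N$ steps we obtain a set $S \subseteq X$ with $|S| \le \lambda_X^N = \lambda_X^{\lceil \log_2(r/\varepsilon)\rceil}$ such that $B_X(p,r) \subseteq \bigcup_{s \in S} B(s, r/2^N) \subseteq \bigcup_{s \in S} B(s,\varepsilon)$, using $r/2^N \le \varepsilon$ and monotonicity of balls in the radius. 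This is exactly the claimed bound (reading $S_j$ in the statement as $S$).

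The induction I would run formally is on $j = 0,1,\dots,N$: the statement $P(j)$ is that there exists $S_j \subseteq X$ with $|S_j| \le \lambda_X^j$ and $B_X(p,r) \subseteq \bigcup_{s \in S_j} B(s, r/2^j)$. The base case $P(0)$ is trivial with $S_0 = \{p\}$. For the inductive step, given $S_j$, cover each $B_X(s, r/2^j)$ (for $s \in S_j$) by at most $\lambda_X$ balls of radius $r/2^{j+1}$ centered in $X$; the union of all these center sets is $S_{j+1}$, of size at most $\lambda_X \cdot \lambda_X^j = \lambda_X^{j+1}$, and it covers $B_X(p,r)$. Then specialize to $j = N$.

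The one point that needs a small argument — and the closest thing to an obstacle — is the reduction from "balls in the metric space $X$" to "balls $B(s,\varepsilon)$ in the ambient $\mathbb{R}^m$" with centers $s \in X$. The definition of $\lambda_X$ given in the paper refers to covering a ball \emph{in} $X$ by balls \emph{in} $X$ of half the radius; a cover by balls $B_X(c, \rho/2)$ with $c \in X$ immediately yields a cover by the ambient balls $B(c,\rho/2)$ with the same centers, since $B_X(c,\rho/2) \subseteq B(c,\rho/2)$. So the ambient balls in the conclusion cause no trouble as long as we keep all centers inside $X$, which the iteration does automatically. Everything else is the routine geometric-series bookkeeping sketched above, so I would keep that part terse.
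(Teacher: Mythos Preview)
Your proposal is correct and follows essentially the same approach as the paper: iterate the doubling property $\lceil \log_2(r/\varepsilon)\rceil$ times, halving the radius at each step, and then observe that $r/2^{\lceil \log_2(r/\varepsilon)\rceil}\le\varepsilon$. Your write-up is in fact slightly more careful than the paper's, since you make explicit that the centers stay in $X$ and that balls in $X$ embed into ambient balls of the same radius; the paper leaves these points implicit.
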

\begin{proof}
  By definition of the doubling dimension, $B_X(p,r)$ is covered by at
  most $\lambda_X$ closed balls of radius $\frac{r}{2}$. Each of these
  balls in turn is covered by $\lambda_X$ balls of radius
  $\frac{r}{4}$, and so on: iteratively, for each $k \ge 1$,
  $B_X(p,r)$ is covered by $\lambda_X^k$ balls of radius
  $\frac{r}{2^k}$. If we select $k=\lceil
  \log_2(\frac{r}{\varepsilon})\rceil$ then $k \ge \log_2
  (\frac{r}{\varepsilon})$, i.e. $\frac{r}{2^k} \le \varepsilon$. This
  means $B_X(p,r)$ is covered by $\lambda_X^{\lceil
    \log_2(\frac{r}{\varepsilon})\rceil}$ balls of radius
  $\varepsilon$.
\end{proof}

We will also use the following lemma, which is proved in
\cite{NNembed} using a concentration estimation for sum of squared
gaussian variables (Chi-squared distribution).
\begin{lem}
  \label{lem1}
  Let $X \subseteq B(0,1)$ be a subset of the $m$-dimensional
  Euclidean unit ball. Then there exist universal constants $c,C>0$
  such that for $k \ge C \log \lambda_X +1$ and $\delta>1$, the
  following holds:
  \begin{equation*}
    \mbox{\sf Prob} (\exists x \in X \mbox{ s.t. } \|Tx\|>\delta)<e^{-ck\delta^2}.
  \end{equation*}
\end{lem}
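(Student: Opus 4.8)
The plan is to reduce Lemma~\ref{lem1} to a net argument combined with the tail bound~\eqref{gauss-bound2} for the Chi-squared distribution. The key obstacle is that $X$ may be uncountable, so we cannot directly union-bound over all of $X$; instead we must control $\|Tx\|$ for all $x\in X$ simultaneously by controlling it on a finite net and then bootstrapping. First I would fix a geometric scale and use Lemma~\ref{keylem}: since $X\subseteq B(0,1)$, for a parameter $\eta>0$ to be chosen we obtain a net $S\subseteq X$ with $|S|\le \lambda_X^{\lceil\log_2(1/\eta)\rceil}$ such that every $x\in X$ lies within $\eta$ of some $s\in S$. Applying~\eqref{gauss-bound2} (in the upper-tail form, i.e.\ the analogous bound $\mathsf{Prob}(\|Ts\|>\theta)\le (\theta^2 e^{1-\theta^2})^{k/2}$ for $\theta>1$, which follows from the same Chernoff computation on $\chi_k^2$) together with the union bound over $S$, I get that with probability at least $1-|S|\cdot e^{-c'k\theta^2}$ we have $\|Ts\|\le\theta$ for every $s\in S$.

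Next I would pass from the net back to all of $X$ by an iterated-refinement / telescoping argument. Write any $x\in X$ as $x = s_0 + (x - s_0)$ with $s_0\in S$ and $\|x-s_0\|\le\eta$; the residual $x-s_0$, rescaled by $1/\eta$, again lies in a unit ball whose relevant subset has doubling constant at most $\lambda_X$, so it admits its own net at scale $\eta^2$, and so on. Summing the geometric series of the resulting bounds on $\|T(\cdot)\|$ at scales $\eta^j$ gives $\|Tx\|\le \theta\sum_{j\ge 0}\eta^j = \theta/(1-\eta)$ for all $x\in X$ simultaneously, on the intersection of the good events across all scales. Choosing $\eta=1/2$ (so the sum is $2\theta$) and noting $\sum_j |S_j| e^{-c'k\theta^2}$ is dominated by a constant times its first term, the total failure probability is at most $\lambda_X^{O(1)}\cdot e^{-c'k\theta^2}$; writing $\delta = 2\theta$ absorbs the factor $4$ into the exponent's constant.

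The last step is to absorb the combinatorial prefactor $\lambda_X^{O(1)}$ into the exponential. Since $|S_j|\le \lambda_X^{j+1}$ but the scales shrink geometrically, the dominant contribution comes from small $j$; more carefully, one checks $\sum_{j\ge 0}\lambda_X^{j+1} e^{-c'k\theta^2 4^{-j}}$ is bounded by $C\lambda_X e^{-c'k\theta^2/2}$ once $\theta$ (equivalently $\delta$) is bounded below, say $\delta>1$ as hypothesized. Then $\lambda_X e^{-c'k\delta^2/8} = e^{\log\lambda_X - c'k\delta^2/8}$, and the hypothesis $k\ge C\log\lambda_X + 1$ with $C$ chosen large enough forces $\log\lambda_X \le (c'k\delta^2)/16$ (using $\delta>1$), leaving a bound of the form $e^{-ck\delta^2}$ for a universal $c>0$. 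I expect the bookkeeping of constants across the telescoping scales to be the fussiest part, but it is routine; the one genuine idea is the multi-scale net, and this is exactly the argument of \cite{NNembed} invoked in the statement.
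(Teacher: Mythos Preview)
The paper does not prove Lemma~\ref{lem1} at all; it simply cites \cite{NNembed}. So there is no ``paper's own proof'' to compare against, and your multi-scale net/chaining strategy is indeed the Indyk--Naor argument the paper is invoking.

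That said, your sketch has a genuine gap in the scale bookkeeping, and as written the argument does not close. You propose, at each scale $j$, the event ``every residual of norm $\le \eta^j$ satisfies $\|T(\text{residual})\|\le \theta\eta^j$''. After normalizing, this is the event $\|T(\text{unit})\|\le \theta$, whose failure probability is $e^{-c'k\theta^2}$ \emph{independently of $j$}. Since the net sizes grow like $|S_j|\le \lambda_X^{j+1}$, the union bound $\sum_j |S_j|\,e^{-c'k\theta^2}$ diverges. Your second expression $\sum_{j\ge 0}\lambda_X^{j+1} e^{-c'k\theta^2 4^{-j}}$ is worse: the exponent tends to $0$, so the terms tend to $\lambda_X^{j+1}\to\infty$. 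There is no way to bound either sum by $C\lambda_X e^{-c'k\theta^2/2}$.

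The standard repair is to let the per-scale threshold grow relative to the scale. Concretely, with nets $S_j$ of $X$ at scale $2^{-j}$ (so $|S_j|\le \lambda_X^{j}$) and increments $\pi_j(x)-\pi_{j-1}(x)$ of norm $\le 3\cdot 2^{-j}$, allocate $u_j = c_0\,\delta\, 2^{-j}\sqrt{j+1}$, so that $\sum_j u_j \le \delta$ while the normalized threshold $u_j/(3\cdot 2^{-j}) \asymp \delta\sqrt{j+1}$ grows. The per-link tail at scale $j$ is then $\le e^{-c'k\delta^2(j+1)}$, and
\[
\sum_{j\ge 0} \lambda_X^{2j}\, e^{-c'k\delta^2(j+1)}
\;\le\; \sum_{j\ge 0} e^{2jk/C}\, e^{-c'k\delta^2(j+1)}
\;\le\; e^{-ck\delta^2}
\]
once $k\ge C\log\lambda_X$ with $C$ large enough and $\delta>1$. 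This is the step that actually uses the hypothesis $k\ge C\log\lambda_X+1$; in your version that hypothesis is invoked only at the very end to absorb a single factor of $\lambda_X$, which is not where the difficulty lies.
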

In the proof of the next result (one of the main results in this
section), we use the same idea as that in \cite{NNembed} for the
nearest neighbor problem.
\begin{thm}
  Given $0 < \delta < 1$ and $p \notin X \subseteq \mathbb{R}^m$. Let
  $T: \mathbb{R}^m \to \mathbb{R}^k$ be a Gaussian random
  projection. Then
  \begin{equation*}
    \mbox{\sf Prob}(T(p) \notin T(X)) = 1
  \end{equation*}
  if $k \ge \mathcal{C}\log_2 (\lambda_X)$, for some universal
  constant $\mathcal{C}$.
  \label{t:main}
\end{thm}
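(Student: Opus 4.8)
The plan is to translate so that $p=0$, reduce via a countable dyadic decomposition to a set confined to a single annulus around $0$, run a single--scale net argument on that piece combining Lemmas~\ref{keylem} and~\ref{lem1}, and finally let the net width tend to $0$ while letting the threshold in Lemma~\ref{lem1} grow with it. After translating, we must show $\mbox{\sf Prob}(\exists x\in X:\ Tx=0)=0$. Since $0=p\notin X$, the sets $A_j=\{x\in X:\ 2^j\le\|x\|<2^{j+1}\}$, $j\in\mathbb{Z}$, form a countable partition of $X$, so — exactly as in the proof of Proposition~\ref{p:1row} — it suffices to fix one nonempty $A_j$ and prove $\mbox{\sf Prob}(\exists x\in A_j:\ Tx=0)=0$.

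Fix such a $j$ and pick $c\in A_j$, so that $A_j\subseteq B_X(c,2^{j+2})$. For each $\varepsilon\in(0,2^{j-1})$, Lemma~\ref{keylem} supplies $S_\varepsilon\subseteq X$ with $N_\varepsilon:=|S_\varepsilon|\le\lambda_X^{\lceil\log_2(2^{j+2}/\varepsilon)\rceil}$ and $A_j\subseteq\bigcup_{s\in S_\varepsilon}B(s,\varepsilon)$; discarding useless centres we may assume every $s\in S_\varepsilon$ has $B(s,\varepsilon)\cap A_j\ne\emptyset$, hence $\|s\|\ge 2^j-\varepsilon\ge 2^{j-1}$. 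For each such $s$, the set $\varepsilon^{-1}(B_X(s,\varepsilon)-s)$ lies in the unit ball and, being a dilated translate of a subset of $X$, has doubling dimension at most $2\log_2\lambda_X$; so Lemma~\ref{lem1}, applied with threshold $\delta>1$ (valid once $k$ exceeds a universal multiple of $\log_2\lambda_X$), gives $\mbox{\sf Prob}\big(\exists x\in B_X(s,\varepsilon):\ \|Tx-Ts\|>\delta\varepsilon\big)<e^{-ck\delta^2}$. Off an event of probability at most $N_\varepsilon e^{-ck\delta^2}$ we therefore have $\|Tx-Ts\|\le\delta\varepsilon$ for all $s\in S_\varepsilon$ and all $x\in B_X(s,\varepsilon)$; in particular, if some $x^\ast\in A_j$ satisfied $Tx^\ast=0$, its covering centre $s$ would obey $\|Ts\|\le\delta\varepsilon$. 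Combining this with the union bound over $S_\varepsilon$ of the estimate $\mbox{\sf Prob}(\|Ts\|\le\delta\varepsilon)<(3\delta^2\varepsilon^2/\|s\|^2)^{k/2}\le(3\delta^2\varepsilon^2/2^{2j-2})^{k/2}$ coming from \eqref{gauss-bound1}, we obtain, for every $\varepsilon\in(0,2^{j-1})$ and every $\delta>1$ with $\delta\varepsilon<2^{j-1}$,
\[
\mbox{\sf Prob}\big(\exists x\in A_j:\ Tx=0\big)\ \le\ N_\varepsilon\Big(e^{-ck\delta^2}+\big(3\delta^2\varepsilon^2/2^{2j-2}\big)^{k/2}\Big).
\]

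It remains to choose the parameters and pass to the limit. Since $N_\varepsilon\le\lambda_X\,(2^{j+2}/\varepsilon)^{\log_2\lambda_X}$ grows only polynomially in $1/\varepsilon$, of degree $\log_2\lambda_X$, the choice $\delta=\varepsilon^{-1/2}$ (admissible for small $\varepsilon$) turns the first summand into $N_\varepsilon e^{-ck/\varepsilon}\to0$, and the second into a constant multiple of $\varepsilon^{\,k/2-\log_2\lambda_X}$, which tends to $0$ as soon as $k>2\log_2\lambda_X$. Letting $\varepsilon\to0$ forces $\mbox{\sf Prob}(\exists x\in A_j:\ Tx=0)=0$; summing over $j$ yields $\mbox{\sf Prob}(T(p)\in T(X))=0$, and hence the claim, once $\mathcal{C}$ is fixed large enough to absorb the constant of Lemma~\ref{lem1}, the factor $2$ from passing to subsets, and the requirement $k>2\log_2\lambda_X$.

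The genuinely delicate point is the last step: one must let the Lemma~\ref{lem1} threshold $\delta$ \emph{grow} as the net is refined — fast enough that $e^{-ck\delta^2}$ outpaces the net cardinality $N_\varepsilon\approx\varepsilon^{-\log_2\lambda_X}$, yet slowly enough that $\delta\varepsilon\to0$ so the projected net points stay bounded away from $0$. The super--polynomial decay of the chi--squared tail is exactly what makes both requirements compatible, and it is also where the hypothesis $k=\Omega(\log_2\lambda_X)$ is consumed; the rest of the argument is the routine bookkeeping of doubling constants of subsets and rescalings.
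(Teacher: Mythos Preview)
Your argument is correct and rests on the same three ingredients as the paper's proof --- the covering Lemma~\ref{keylem}, the dilation bound of Lemma~\ref{lem1}, and the small-ball estimate~\eqref{gauss-bound1} --- but the organisation is genuinely different. The paper slices $X$ into infinitely many thin concentric shells $X_j=X\cap B(p,r_j)\setminus B(p,r_{j-1})$ of width $\varepsilon=d/N$ and linearly growing radii $r_j=(2+j)\varepsilon$, applies Lemma~\ref{lem1} on each shell with threshold $\sqrt{j}$, sums the resulting bounds over \emph{all} $j$ simultaneously, and only then lets $N\to\infty$. You instead fix a single dyadic annulus $A_j$, run the net argument there with a free parameter $\varepsilon$ and a coupled threshold $\delta=\varepsilon^{-1/2}$, drive the probability to zero by sending $\varepsilon\to 0$, and finish with the countable-union-of-null-sets observation from Proposition~\ref{p:1row}. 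Your route is more modular and arguably cleaner for this statement: it isolates the single genuinely delicate balance (net cardinality versus Gaussian tail) on one scale at a time, and you are explicit about the doubling-constant inflation $\lambda_Y\le\lambda_X^2$ for subsets, which the paper leaves implicit. The paper's all-shells-at-once bookkeeping, on the other hand, is exactly what is reused in the threshold version (the next theorem), where a per-shell ``probability zero'' conclusion is no longer available and one must control the total sum quantitatively; your decomposition would need to be reworked there.
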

\begin{proof}
  Let $\varepsilon>0$ and $0 = r_0 < r_1 < r_2 < \ldots$ be positive
  scalars (their values will be defined later). For each $j =
  1,2,3,\ldots$ we define a set
  \begin{equation*}
    X_j = X \cap B(p, r_j) \smallsetminus B(p, r_{j-1}).
  \end{equation*}

  Since $X_j\subseteq B_X(p,r_j)$, by Lemma \ref{keylem} we can find a
  point set $S_j\subseteq X$ of size $|S_j|\le\lambda_X^{\lceil
    \log_2(\frac{r_j}{\varepsilon})\rceil}$ such that
  \begin{equation*}
    X_j \subseteq \bigcup_{s \in S_j} B(s, \varepsilon).
  \end{equation*}
  Hence, for any $x \in X_j$, there is $s \in S_j$ such that
  $\|x-s\|<\varepsilon$. Moreover, by the triangle inequality, any
  such $s$ satisfies $r_{j-1}-\varepsilon<\|s - p\|<r_j+\varepsilon$,
  so without loss of generality we can assume that
  \begin{equation*}
    S_j\subseteq B(p,r_j+\varepsilon)\smallsetminus B(p,
    r_{j-1}-\varepsilon).
  \end{equation*}
  We denote by $\mathcal{E}_j$ the event that:
  \begin{equation*}
    \exists s \in S_j, \; \exists x \in X_j \cap B(s, \varepsilon)
    \mbox{ s.t. } \|Ts-Tx\| > \varepsilon \sqrt{j}.
  \end{equation*}
  By the union bound, we have
  \begin{eqnarray*}
    \mbox{\sf Prob}(\mathcal{E}_j) & \le & \sum_{s \in S_j} \mbox{\sf
      Prob}\big(\exists x \in X_j  \cap B(s, \varepsilon) \mbox{ s.t. } \|Ts - Tx\| > \varepsilon \sqrt{j}\big) \\ 
    & \le &  \sum_{s \in S_j} e^{-c_1kj} \qquad \mbox{ (for some
      universal constant $c_1$ by Lemma \ref{lem1})} \\ 
    &\le& \lambda_X^{\lceil \log_2(\frac{r_j +
        \varepsilon}{\varepsilon})\rceil} \; e^{-c_1kj}. 
  \end{eqnarray*}
  Again by the union bound, we have:
  \begin{eqnarray*}
    \mbox{\sf Prob} \big(\exists x \in X \mbox{ s.t } T(x) = T(p)\big)
    & =& \mbox{\sf Prob}\big(\exists x \in \bigcup_{j = 1}^\infty X_j
    \mbox{ s.t } T(x) = T(p)\big) \\ 
    & \le& \sum_{j=1}^\infty \mbox{\sf Prob}\big(\exists x \in X_j
    \mbox{ s.t } T(x) = T(p)\big).  
  \end{eqnarray*}
  Now we will estimate the individual probabilities:
  \begin{eqnarray*}
    & \;& \mbox{\sf Prob} \big(\exists x \in X_j \mbox{ s.t } T(x) =
    T(p) \big) \\ 
    &\le &  \mbox{\sf Prob} \big((\exists x \in X_j \mbox{ s.t } T(x)
    = T(p)) \wedge \mathcal{E}^c_j\big) + \mbox{\sf Prob} (\mathcal{E}_j) \\
    & \le & \mbox{\sf Prob}\big(\exists x \in X_j, s \in S_j
    \cap B(x, \varepsilon) \mbox{ s.t } T(x) = T(p)\land \|T(s) - T(x)\|
    \le \varepsilon \sqrt{j}\big) + \mbox{\sf Prob} (\mathcal{E}_j)  \\ 
    & \le & \mbox{\sf Prob}\big(\exists s \in S_j \mbox{ s.t
    }\|T(s)-T(p)\|<\varepsilon\sqrt{j}\big)  +  \lambda_X^{\lceil  
  \log_2(\frac{r_j + \varepsilon}{\varepsilon})\rceil}\,e^{-c_1kj}. 
  \end{eqnarray*}
  Next, we choose $\varepsilon = \frac{d}{N}$ for some large $N$; and
  for each $j \ge 1$, we choose $r_j = (2 + j) \varepsilon$.  For $j <
  N - 2$, by definition it follows that $X_j = \varnothing$. Therefore
  \begin{equation*}
    \mbox{\sf Prob}\big(\exists x \in X_j \mbox{ s.t }  T(s) = T(p)\big)  = 0.
  \end{equation*} 
  On the other hand, for $j \ge N-2$,
  \begin{eqnarray*}
	& & \mbox{\sf Prob}\big(\exists s \in S_j \mbox{ s.t }  \|T(s)
    - T(p)\| \le \varepsilon \sqrt{j}\big) \\ 
    & \le & \lambda_X^{\lceil \log_2(\frac{r_j +
        \varepsilon}{\varepsilon})\rceil} \;  \mbox{\sf Prob}\big(
      \|T(z)\| \le \frac{\varepsilon \sqrt{j}}{r_{j-1} -
        \varepsilon}\big)  \quad \mbox{ for an arbitrary } z \in
    \mathbb{S}^{n-1} \\ 
    & = & \lambda_X^{\lceil \log_2(3+j)\rceil}\, \mbox{\sf Prob}\big(
    \|T(z)\| \le \frac{1} {\sqrt{j}}\big) \quad \mbox{ for an
  arbitrary } z \in \mathbb{S}^{n-1} \\ 
    & < & \lambda_X^{\lceil \log_2(3+j)\rceil}\, j^{-k/2} \qquad
    \mbox{ by the estimation \eqref{gauss-bound1}}. 
  \end{eqnarray*}
  Note that $\lambda_X^{\lceil \log_2(3+j)\rceil} \le
  \lambda_X^{\log_2(6+2j)} = (6 + 2j)^{\log_2 \lambda_X} < j^{(2
    \log_2 \lambda_X)}$ for large enough $N$. Therefore, we have
  \begin{eqnarray*}
    \mbox{\sf Prob} \big(\exists x \in X_j \mbox{ s.t } T(x) = T(p)
    \big)  & \le&  \lambda_X^{\lceil \log_2(3+j)\rceil}\,
\big(j^{-k/2} +  e^{-c_1kj} \big) \\ & \le & j^{-c_2k} +  e^{-c_3kj}  
  \end{eqnarray*}
  for some universal constants $c_2, c_3$, provided that $k \ge
  \mathcal{C}_1 \log \lambda_X$ for some large enough constant
  $\mathcal{C}_1$. Finally, by the union bound,
  \begin{eqnarray*}
    \mbox{\sf Prob} \big(T(p) \notin T(X) \big) & =& 1 -  \mbox{\sf
      Prob} \big(T(p) \in T(X) \big) \\ 
    & \ge&  1 - \sum\limits_{i=N-2}^\infty \big(i^{-c_2k} +
      e^{-c_3kj} \big)
  \end{eqnarray*}
  which tends to $1$ when $N$ tends to infinity. 
\end{proof}

Our final result in the section is an extension of Thm.~\ref{t:main}
to the threshold case.
\begin{thm}
  Let $p \notin X \subseteq \mathbb{R}^m$, $T:\mathbb{R}^m \to
  \mathbb{R}^k$ be a Gaussian random projection, and $d=\min\limits_{x
    \in X} \|p - x\|$.  Then for all $0 < \delta < 1$ and all
  $0 <\tau < \kappa d$ for some constant $\kappa < 1$, we have
  \begin{equation*}
    \mbox{\sf Prob}(\mbox{\sf dist}(T(p),T(X)) > \tau) >  1 - \delta
  \end{equation*}
  if $k$ is $O(\frac{\log (\frac{\lambda_X}{\delta} ) }{\log
  	(\frac{d}{\tau})})$.
\end{thm}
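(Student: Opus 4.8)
The plan is to adapt the proof of Theorem~\ref{t:main}, replacing the collision event $T(x)=T(p)$ by the $\tau$-collision event $\|T(x)-T(p)\|\le\tau$ and making the choice of net resolution and annulus radii quantitative instead of passing to a limit. As there, it suffices to bound $\mbox{\sf Prob}(\exists x\in X:\;\|T(x)-T(p)\|\le\tau)$ by $\delta$. Fix a resolution $\varepsilon>0$, to be chosen of order $\tau$; set $r_0=0$, $r_j=(2+j)\varepsilon$, and decompose $X$ into the annuli $X_j=X\cap B(p,r_j)\smallsetminus B(p,r_{j-1})$. Since $d=\min_{x\in X}\|p-x\|$, the annulus $X_j$ is empty whenever $r_j\le d$, so only indices $j$ of order $d/\varepsilon$ and larger contribute. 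For each such $j$, Lemma~\ref{keylem} yields $S_j\subseteq X$ with $|S_j|\le\lambda_X^{\lceil\log_2((r_j+\varepsilon)/\varepsilon)\rceil}=\lambda_X^{\lceil\log_2(3+j)\rceil}$ and $X_j\subseteq\bigcup_{s\in S_j}B(s,\varepsilon)$; crucially every $s\in S_j$ lies in $X$, hence $\|s-p\|\ge d$, and this (beside the weaker $\|s-p\|>r_{j-1}-\varepsilon$, needed only for the tail) is what makes the ratio $\tau/d$ appear.

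Next, let $\mathcal{E}_j$ be the event that there exist $s\in S_j$ and $x\in X_j\cap B(s,\varepsilon)$ with $\|T(s)-T(x)\|>\varepsilon\sqrt j$. Rescaling the difference vectors and applying Lemma~\ref{lem1} exactly as in Theorem~\ref{t:main} gives $\mbox{\sf Prob}(\mathcal{E}_j)\le|S_j|\,e^{-c_1kj}$ for a universal $c_1>0$, provided $k\ge\mathcal{C}_1\log\lambda_X+1$. On $\mathcal{E}_j^c$, if $x\in X_j$ satisfies $\|T(x)-T(p)\|\le\tau$ and $s\in S_j$ has $\|x-s\|<\varepsilon$, then by the triangle inequality $\|T(s)-T(p)\|\le\tau+\varepsilon\sqrt j$; hence
\[
\mbox{\sf Prob}\big(\exists x\in X_j:\;\|T(x)-T(p)\|\le\tau\big)\;\le\;\sum_{s\in S_j}\mbox{\sf Prob}\big(\|T(s)-T(p)\|\le\tau+\varepsilon\sqrt j\big)\;+\;\mbox{\sf Prob}(\mathcal{E}_j).
\]
For $s\in S_j$, writing $z=(s-p)/\|s-p\|\in\mathbb{S}^{m-1}$ and using $\|s-p\|\ge\max(d,\,r_{j-1}-\varepsilon)$, estimate~\eqref{gauss-bound1} gives $\mbox{\sf Prob}(\|T(s)-T(p)\|\le\tau+\varepsilon\sqrt j)<\big(3(\tau+\varepsilon\sqrt j)^2/\|s-p\|^2\big)^{k/2}$.

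It remains to choose $\varepsilon$ and to sum over $j$. One takes $\varepsilon$ small enough that $\varepsilon\sqrt j$ stays of order $\tau$ at the innermost non-empty annulus (where $j$ is of order $d/\varepsilon$), which forces $\varepsilon$ of order $\tau^2/d$; for the larger annuli the factor $(\tau+\varepsilon\sqrt j)/(r_{j-1}-\varepsilon)$ is already of order $1/\sqrt j$ and decays. Because $\tau<\kappa d$ with $\kappa<1$ small enough, every contributing ratio $(\tau+\varepsilon\sqrt j)/\|s-p\|$ stays below $1$, so~\eqref{gauss-bound1} genuinely produces geometric decay in $k$; moreover $|S_j|\le(6+2j)^{\log_2\lambda_X}$. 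Summing the bound of the previous paragraph over all contributing $j$: the $\mbox{\sf Prob}(\mathcal{E}_j)$ terms contribute a convergent series $\sum_j(6+2j)^{\log_2\lambda_X}e^{-c_1kj}$, which is below $\delta/2$ once $k\ge\mathcal{C}_2\log\lambda_X$; the main terms are dominated by the innermost annulus and contribute a constant times $\lambda_X^{O(\log(d/\tau))}(\tau/d)^{\Omega(k)}$, which falls below $\delta/2$ as soon as $k$ is of the order stated in the theorem. Adding the two estimates gives $\mbox{\sf Prob}(\exists x\in X:\;\|T(x)-T(p)\|\le\tau)<\delta$, that is, $\mbox{\sf Prob}(\mbox{\sf dist}(T(p),T(X))>\tau)>1-\delta$.

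The delicate point — and the reason both $\log(d/\tau)$ and the hypothesis $\tau<\kappa d$ appear — is the competition between net resolution and effective threshold. Keeping the collision scale at $\tau$ forces a fine net ($\varepsilon$ of order $\tau^2/d$), which inflates the covering number of the innermost annulus to $\lambda_X^{\Theta(\log(d/\tau))}$; balancing this blow-up against the collision probability $(\tau/d)^{\Theta(k)}$ — which decays only because $\tau<\kappa d$ keeps all the relevant ratios below $1$ — is what pins down the order of $k$. The far-annulus tail, and the verification that the rescaled difference vectors still have bounded doubling dimension so that Lemma~\ref{lem1} applies, are routine and are handled exactly as in Theorem~\ref{t:main}.
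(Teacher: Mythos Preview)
Your argument follows the same overall strategy as the paper---decompose $X$ into annuli around $p$, cover each annulus by an $\varepsilon$-net via Lemma~\ref{keylem}, control the net-approximation error through the events $\mathcal{E}_j$ and Lemma~\ref{lem1}, and union-bound the collision probabilities---but the parameterization is genuinely different. You keep the linear radii $r_j=(2+j)\varepsilon$ from Theorem~\ref{t:main} and compensate by taking $\varepsilon$ of order $\tau^2/d$, which pushes the first non-empty annulus out to index $j_0\sim(d/\tau)^2$ and inflates its covering number to $\lambda_X^{\Theta(\log(d/\tau))}$. The paper instead changes the radii to $r_j=\tau\sqrt{j+1}+(2+j)\varepsilon$ with $\varepsilon=\tau/N$ for a fixed constant $N$ depending only on $\kappa$; this choice makes $(\tau+\varepsilon\sqrt{j})/(r_{j-1}-\varepsilon)$ equal to exactly $1/\sqrt{j}$ for every $j\ge 2$, so the annulus structure starts already at $j=1$ with covering number only $\lambda_X^{O(1)}$, and the case $j=1$ is handled separately using $\|s-p\|\ge d$. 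Both routes inevitably require $k\gtrsim\log\lambda_X$ (forced by Lemma~\ref{lem1}) together with $k\gtrsim\log(1/\delta)/\log(d/\tau)$ from the leading collision term, so the final bounds coincide; the paper's radii are cleaner and work for any $\kappa<1$, whereas your simpler radii need $\kappa$ small enough to keep the innermost ratio below~$1$, but your version is a perfectly valid extension of the Theorem~\ref{t:main} argument.
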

\begin{proof}
  For $j = 1,2,\ldots $ we construct the sets $X_j, S_j$ similarly as
  those in the proof of Thm.~\ref{t:main} (where the values of $r_j$
  and $\varepsilon$ will be defined later). Then we have
  \begin{eqnarray*}
    \mbox{\sf Prob} \big(\exists x \in X \mbox{ s.t } \|T(x) -
    T(p)\| < \tau \big)   
    & =& \mbox{\sf Prob}\big(\exists x \in \bigcup_{j = 1}^\infty X_j
    \mbox{ s.t }  \|T(x) - T(p)\| < \tau\big) \\ 
    & \le& \sum\limits_{j=1}^\infty \mbox{\sf Prob}\big(\exists x \in
    X_j \mbox{ s.t }  \|T(x) - T(p)\| < \tau \big).  
  \end{eqnarray*}
  For all $j \ge 1$, we have
  \begin{eqnarray*}
    && \mbox{\sf Prob} \big(\exists x \in X_j \mbox{ s.t } \|T(x) - T(p)\| < \tau\big) \\
    & \le & \mbox{\sf Prob} \big((\exists x \in X_j \mbox{ s.t } \|T(x)
    - T(p)\| < \tau) \wedge \mathcal{E}^c_j\big) + \mbox{\sf
      Prob}(\mathcal{E}_j) \\  
    & \le & \mbox{\sf Prob}\big(\exists x \in X_j, s \in S_j
    \cap B(x, \varepsilon) \mbox{ s.t } \|T(x) - T(p)\| < \tau\land \|T(s)
    - T(x)\| \le \varepsilon \sqrt{j}\big) + \mbox{\sf Prob}
    (\mathcal{E}_j) \\
    & \le & \mbox{\sf Prob}\big(\exists s \in S_j \mbox{ s.t }  \|T(s)
    - T(p)\| < \tau + \varepsilon \sqrt{j}\big)  +  \lambda_X^{\lceil
        \log_2(\frac{r_j + \varepsilon}{\varepsilon})\rceil} \;
      e^{-c_1kj}. 
  \end{eqnarray*}
  Now we choose $\varepsilon = \frac{\tau}{N}$ for some $N > 0$ such that $1 + \frac{1}{N} < \frac{1}{\kappa}$ and for each $j \ge 1$, we choose
  $r_j = \tau \sqrt{j+1} + (2 + j) \varepsilon$.  
  For $j = 1$, by the union bound we have
  \begin{eqnarray}
  	&& \mbox{\sf Prob}\big(\exists s \in S_1 \mbox{ s.t }  \|T(s) -
  	T(p)\| \le \tau + \varepsilon \sqrt{1}\big) \nonumber \\ [-0.2em]
  	& \le & \lambda_X^{\lceil \log_2(\frac{r_1 +
  			\varepsilon}{\varepsilon})\rceil}\,\mbox{\sf
  		Prob}\big(\|T(z)\| \le \frac{\tau + \varepsilon}{d}\big) \quad \mbox{ for an
  		arbitrary } z \in \mathbb{S}^{m-1} \nonumber \\[-0.2em]
  	& = & \lambda_X^{\lceil \log_2(4 + N\sqrt{2})\rceil}\,\mbox{\sf
  		Prob}\bigg(\|T(z)\| \le (1 +\frac{1} {N})\frac{\tau}{d}\bigg)  \quad \mbox{
  		for an arbitrary } z \in \mathbb{S}^{m-1} \nonumber \\[-0.2em] 
  	& < & \lambda_X^{\lceil \log_2(4+ N\sqrt{2})\rceil}\, \bigg((1 +\frac{1} {N}) \frac{\tau}{d}\bigg)^{k/2}
  	\qquad \mbox{ by estimation \eqref{gauss-bound1}} \nonumber \\[-0.2em]
  	& < & \bigg((1 +\frac{1} {N}) \frac{\tau}{d}\bigg)^{c_2k}		\label{concl1}
  \end{eqnarray}
  for some universal constant $c_2 > 0$, as long as $k > \mathcal{C} \log (\lambda_X)$ for some $\mathcal{C}$ large enough. 
  
 For $j \ge 2$, we
 have
 \begin{eqnarray}
 	&& \mbox{\sf Prob}\big(\exists s \in S_j \mbox{ s.t }  \|T(s) -
 	T(p)\| \le \tau + \varepsilon \sqrt{j}\big) \nonumber \\ [-0.2em]
 	& \le & \lambda_X^{\lceil \log_2(\frac{r_j +
 			\varepsilon}{\varepsilon})\rceil}\,\mbox{\sf
 		Prob}\big(\|T(z)\| \le \frac{\tau + \varepsilon
 		\sqrt{j}}{r_{j-1} - \varepsilon}\big) \quad \mbox{ for an
 		arbitrary } z \in \mathbb{S}^{m-1} \nonumber \\ [-0.2em]
 	& = & \lambda_X^{\lceil \log_2(3+j + N\sqrt{j+1})\rceil}\,\mbox{\sf
 		Prob}\big(\|T(z)\| \le \frac{1} {\sqrt{j}}\big)  \quad \mbox{
 		for an arbitrary } z \in \mathbb{S}^{m-1} \nonumber \\ [-0.2em]
 	& < & \lambda_X^{\lceil \log_2(3+j + N\sqrt{j+1})\rceil}\, j^{-k/2}
 	\qquad \mbox{ by estimation \eqref{gauss-bound1}} \nonumber \\[-0.2em]
 	& < & j^{-c_3 k} \label{concl2}
 \end{eqnarray}
 for some universal constant $c_3 > 0$, as long as $k > \mathcal{C} \log (\lambda_X)$ for some $\mathcal{C}$ large enough.
 
Similarly, for all $1 \le j$, we have
\begin{eqnarray}
\lambda_X^{\lceil
	\log_2(\frac{r_j + \varepsilon}{\varepsilon})\rceil} \;
e^{-c_1kj} \le  e^{-c_4kj},	\label{concl3}
\end{eqnarray}
for some universal constant $c_4 > 0$, 
as long as $k > \mathcal{C} \log (\lambda_X)$ for some $\mathcal{C}$ large enough.

   From estimations (\ref{concl1}), (\ref{concl2}), (\ref{concl3}) and by the union bound we have:
    \begin{eqnarray*}
  	\mbox{\sf Prob}(\mbox{\sf dist}(T(p),T(X)) \ge \tau) 
  	& \ge& 1 - \sum_{j=1}^\infty \mbox{\sf Prob}(\mbox{\sf dist}(T(p),T(X_j)) < \tau) \\[-0.2em]
  	& \ge & 1 -  \bigg((1 +\frac{1} {N}) \frac{\tau}{d}\bigg)^{c_2k}	 - \sum_{j=2}^\infty j^{-c_3k}- \sum_{j=1}^\infty  e^{-c_4kj}\\ [-0.2em]
  	& \ge& 1 - \delta \qquad \mbox{ for $k = O(\frac{\log (\frac{\lambda_X}{\delta} ) }{\log
  			(\frac{d}{\tau})})$ large enough.}
  \end{eqnarray*}
  \end{proof}


\end{document}